\newif\ifdviwin
\newif\ifdviwin
\def\esf{\mathbb{S}}
\def\R{\mathbb{R}}
\def\N{\mathbb{N}}
\def\D{\mathbb{D}}
\def\ee{\textsc{e}}
\def\J{\mathcal{J}}
\def\I{\mathcal{I}}
\def\cM{\mathcal{M}}
\newcommand{\longui}{\operatorname{length}}
\newcommand{\dist}{\operatorname{dist}}
\newcommand{\intc}{\operatorname{Int}}
\def\a{{\alpha}}
\def\g{{\gamma}}
\def\l{{\lambda}}
\def\de{{\delta}}
\def\ve{{\varepsilon}}
\def\vp{{\varphi}}
\def\s{{\sigma}}
\def\ep{{\epsilon}}
\newtheorem{lemma}{Lemma}
\newtheorem{remark}{Remark}
\newtheorem{theorem}{Theorem}
\newtheorem{corollary}{Corollary}
\newtheorem{definition}{Definition}
\newtheorem*{theoremintro}{Theorem}
\begin{document}
\mbox{}\vspace{0.4cm}\mbox{}

\begin{center}
\rule{14cm}{1.5pt}\vspace{0.5cm}

{\Large \bf Compact complete proper minimal immersions} \\
[0.3cm]{\Large \bf
in strictly convex bounded regular domains of $\R^3$}\\
\vspace{0.5cm} {\large Antonio Alarc\'{o}n\footnote[0]{Research
partially supported by Spanish MEC-FEDER Grant MTM2007-61775 and
Regional
 J. Andaluc\'{i}a Grant P06-FQM-01642.} }

\vspace{0.3cm} \rule{14cm}{1.5pt}
\end{center}

\vspace{0.5cm}

\begin{quote}
{\small
\noindent {\bf Abstract}\quad Consider a strictly convex
bounded regular domain $C$ of $\R^3$. For any arbitrary finite
topological type we find a compact Riemann surface $\cM$, an open
domain $M\subset \cM$ with the fixed topological type, and a
conformal complete proper minimal immersion $X:M\to C$ which can
be extended to a continuous map $X:\overline{M}\to \overline{C}$.
\\

\noindent {\bf 2000 Mathematics Subject Classification}\quad 53A10 · 53C42 · 49Q10 · 49Q05 \\

\noindent {\bf Keywords}\quad Complete minimal surface · Proper
immersion · Plateau problem · Limit set
}\end{quote}


\section{Introduction}\label{sec: intro}

The global theory of complete minimal surfaces in $\R^3$ has been
developed for almost two and one-half centuries. One of the
central questions in this theory has been the Calabi-Yau problem,
which dates back to the 1960s. Calabi \cite{C} asked whether or
not it is possible for a complete minimal surface in $\R^3$ to be
bounded. The most important result in this line is due to
Nadirashvili \cite{N1}, who constructed a complete minimal disk in
a ball. After Nadirashvili's answer, Yau \cite{Y} stated new
questions related to the embeddedness and properness of surfaces
of this type.

Concerning the embedded question, Colding and Minicozzi \cite{CM}
proved that a complete embedded minimal surface with finite
topology in $\R^3$ must be properly embedded. In particular, it
must be unbounded. This result was generalized in two different
directions. On the one hand, Meeks, P\'{e}rez and Ros \cite{MPR}
proved that any complete embedded minimal surface in $\R^3$ with
finite genus and countably many ends must be proper in the space.
On the other hand, Meeks and Rosenberg \cite{MR} showed that a
complete embedded minimal surface with positive injectivity radius
is proper in $\R^3$.

Regarding the properness of the examples, Mart\'{i}n and Morales
\cite{MM1} introduced an additional ingredient into Nadirashvili's
technique to prove that every convex domain and every regular
bounded domain admits a complete properly immersed minimal disk.
Moreover, they showed that the limit set of such surfaces can be
chosen close to a prescribed smooth Jordan curve on the boundary
of the domain \cite{MM2}. These arguments were generalized by
Ferrer, Mart\'{i}n and the author \cite{AFM} to prove that any
open Riemann surface of finite topology can be properly and
minimally immersed in any convex domain of $\R^3$ or any bounded
and smooth one. The same conclusion for any open Riemann surface
was proved by Ferrer, Mart\'{i}n and Meeks \cite{FMM}. In contrast
to these existence results, Mart\'{i}n, Meeks and Nadirashvili
\cite{MMN} proved the existence of bounded open regions of $\R^3$
which do not contain a complete properly immersed minimal surface
with finite topology.

The study of the Calabi-Yau problem gave rise to new lines of work
and techniques. Among other things, these new ideas established a
surprising relationship between the theory of complete minimal
surfaces in $\R^3$ and the Plateau problem. This problem consists
of finding a minimal surface spanning a given family of closed
curves in $\R^3,$ and it was solved independently by Douglas
\cite{D} and Rad\'{o} \cite{R}, for any Jordan curve. The link
between complete minimal surfaces and the Plateau problem is the
existence of compact complete minimal immersions in $\R^3$,
according to the following definition (see \cite{AN,A}).
\begin{definition}\label{def: compact} By a compact minimal
immersion we mean a minimal immersion $X:M\to\R^3,$ where $M$ is
an open region of a compact Riemann surface $\mathcal{M},$ and
such that $X$ can be extended to a continuous map
$X:\overline{M}\to\R^3.$
\end{definition}
Mart\'{i}n and Nadirashvili \cite{MN} constructed compact complete
conformal minimal immersions $X:\D\to\R^3$ such that $X_{|\partial
\D}$ is an embedding and $X(\esf^1)$ is a Jordan curve with
Hausdorff dimension $1.$ Furthermore, they showed that the set of
Jordan curves $X(\esf^1)$ constructed by the above procedure is
dense in the space of Jordan curves of $\R^3$ with the Hausdorff
distance. After this, the author constructed compact complete
minimal immersions of Riemann surfaces of arbitrary finite
topology \cite{A}. As in the simply connected case, the set of
closed curves given by the limit sets of these immersions is dense
in the space of finite families of closed curves in $\R^3$ which
admit a solution to the Plateau problem. In spite of these density
theorems, there are some requirements for the limit set of a
compact complete minimal immersion. Nadirashvili and the author
\cite{AN} proved that there is no compact complete proper minimal
immersion of the disk into a polyhedron of $\R^3$ (see \cite{N2}
for the case of a cube). In fact, given $D\subset\R^3$ a regular
domain and $X:\D\to D$ a compact complete proper minimal
immersion, then the second fundamental form of the surface
$\partial D$ at any point of the limit set of $X$ must be
nonnegatively definite \cite{AN}.

The aim of the present paper is to join the techniques used in the
construction of complete proper minimal surfaces with finite
topology in convex domains of $\R^3$, and those used to construct
compact complete minimal immersions, in order to prove the
following result.

\begin{theoremintro}\label{th:intro}
For any $C$ strictly convex bounded regular domain of $\R^3$,
there exist compact complete proper minimal immersions $X:M\to C$
of arbitrary finite topological type.

Moreover, for any finite family $\Sigma$ of closed curves in
$\partial C$ which admits a solution to the Plateau problem, and
for any $\xi>0$, there exists a minimal immersion $X:M\to C$ in
the above conditions and such that $\de^H (\Sigma,X(\partial
M))<\xi$, where $\de^H$ means the Hausdorff distance.
\end{theoremintro}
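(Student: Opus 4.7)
The plan is to merge the inductive López--Ros deformation scheme used in \cite{AFM, FMM} to produce complete properly immersed minimal surfaces of arbitrary finite topology in convex or bounded smooth domains with the refinements from \cite{MN, A} that guarantee continuous extension of the limit immersion to the boundary. The output will be a nested sequence of compact minimal immersions $X_n:\overline{M_n}\to\overline{C}$ of the same prescribed topological type, with intrinsic diameters diverging and $X_n(\partial M_n)$ collapsing in Hausdorff distance onto a family of curves in $\partial C$ close to $\Sigma$.

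For the base case I would fix a conformal parametrization $Y:\overline{M_0}\to\overline{C}$ of a Plateau solution spanning a family of Jordan curves in $\partial C$ within Hausdorff distance $\xi/2$ of $\Sigma$, where $M_0$ is an open region of a compact Riemann surface $\cM$ of the fixed topological type. Pushing inward slightly yields $X_0:\overline{M_0}\to C$ whose boundary sits in a thin shell inside $\partial C$. The inductive step produces $X_{n+1}:\overline{M_{n+1}}\to\overline{C}$, with $M_{n+1}\subset M_n\subset \cM$, satisfying, for a prescribed sequence of tolerances $\eta_n\searrow 0$ with $\sum_n\eta_n<\xi/2$:
\begin{enumerate}
\item[(i)] the intrinsic distance from a fixed basepoint to $\partial M_{n+1}$ in the induced metric is at least $n+1$;
\item[(ii)] $X_{n+1}(\overline{M_n}\setminus M_{n+1})$ lies in an $\eta_n$--neighborhood of $\partial C$;
\item[(iii)] $\|X_{n+1}-X_n\|_{C^0(\overline{M_{n+1}})}<\eta_n$, with the Jacobian quotients close to $1$ on a good subdomain $K_n\subset M_{n+1}$ exhausting the eventual $M=\bigcap_n M_n$;
\item[(iv)] $X_{n+1}(\overline{M_{n+1}})\subset \overline{C}$ with $X_{n+1}(M_{n+1})\subset C$.
\end{enumerate}
The mechanism is a finite sequence of López--Ros perturbations supported on thin annular neighborhoods inside $M_n\setminus K_{n-1}$: each perturbation opens up a long ``fjord'' running almost parallel to a support plane at a nearby boundary point of $C$, adding a definite amount of intrinsic length while deflecting the surface in $\R^3$ by a controlled amount. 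Strict convexity of $\partial C$ supplies, at each point, a tangent plane whose associated signed distance is strictly subharmonic on minimal graphs nearby, providing the quantitative tangential-versus-normal trade-off that lets (i)--(iv) be maintained simultaneously.

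Passing to the limit $M:=\bigcap_n M_n$ and $X:=\lim_n X_n$, condition (iii) produces a conformal minimal immersion $X:M\to \R^3$; condition (ii) forces $X(\overline{M}\setminus M)\subset\partial C$, so together with the $C^0$ telescoping sum we obtain the continuous extension $X:\overline{M}\to\overline{C}$ with $X(M)\subset C$; condition (i) gives completeness; and properness follows by combining (i) with (ii), since a divergent sequence in $M$ must approach $\partial M$ and hence be mapped into a shrinking neighborhood of $\partial C$. The Hausdorff estimate $\de^H(\Sigma,X(\partial M))<\xi$ is then a direct consequence of the choice of the base Plateau solution and of $\sum_n\eta_n$.

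The main obstacle is the tension between (i) and (iii)--(iv): blowing up the intrinsic distance by López--Ros surgery inevitably perturbs the immersion in $\R^3$, and without strict convexity there is no quantitative reason the perturbed surface should remain inside $C$, let alone extend continuously to $\partial M$. Strict positivity of the second fundamental form of $\partial C$ is precisely what makes this trade-off iterable, echoing the obstruction observed in \cite{AN} for polyhedra. The technical core of the argument is the bookkeeping of the supports and sizes of successive perturbations so that later steps preserve the completeness and $C^0$ control gained earlier; this is the finite-topology analogue of the bookkeeping already carried out in \cite{A} for the compact complete but not necessarily proper case and in \cite{AFM} for the proper but not necessarily compact case.
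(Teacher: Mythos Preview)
Your proposal is correct and follows essentially the same route as the paper: the paper packages your inductive step into a single technical lemma (Lemma~\ref{lem: main-lemma}, taken from \cite{AFM}) whose property (L7) is exactly your condition (iii), namely a $C^0$ bound $\|X_{n+1}-X_n\|<\mathtt{m}(b_{n-1},b_n,\ep_n,E^{n-1},E^n)$ on all of $\overline{M(\J_{n+1})}$, and then chooses the exhausting convex bodies $E^n=C_{-t_n}$ and the parameters so that $\sum_n \mathtt{m}(\cdots)$ is summable---the strict convexity entering precisely via the lower bound $\kappa_1(\partial E^{n-1})>\kappa_1(\partial C)>0$ in the explicit formula for $\mathtt{m}$. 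One minor technical point: the limit domain should be realized as an \emph{increasing} union of slightly shrunk domains $M(\J_n^{\xi_n})$ rather than literally $\bigcap_n M_n$, to ensure it is open with the correct topological type; the paper handles this via the chain $\J_{n-1}^{\xi_{n-1}}<\J_n^{\xi_n}<\J_n<\J_{n-1}$.
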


Since the existence of solution of the Plateau problem for any
Jordan curve, it is deduced that the above result has specially
interesting consequences for disks. Given $C$ a strictly convex
bounded regular domain of $\R^3$, then any Jordan curve in
$\partial C$ can be approximated in terms of the Hausdorff
distance by the limit set of a compact complete proper minimal
immersion of the disk into $C$ (Subsection \ref{subsec: disk}).

Finally, we would like to point out that our result is sharp in
the following sense. If we remove the hypothesis of $C$ being
strictly convex, then the theorem fails \cite{MMN,AN}.


\section{Preliminaries and background}\label{sec: preliminaries}

Here we briefly summarize the notation and results that we use in
the paper.

\subsection{Riemann surfaces}\label{sucsec: riemann}

Throughout the paper we work on a compact Riemann surface endowed
with a Riemannian metric. We consider that the following data are
fixed.

\begin{definition}\label{def: M-ds}
Let $M'$ be a compact Riemann surface of genus $\s\in\N \cup
\{0\},$ and $ds^2$ a Riemannian metric in $M'.$
\end{definition}

Consider a subset $W \subset M',$ and a Riemannian metric
$d\tau^2$ in $W.$ Given a curve $\a$ in $W,$ by
$\longui_{d\tau}(\a)$ we mean the length of $\a$ with respect to
the metric $d\tau^2.$ Moreover, we define:
\begin{enumerate}[1]
\item[$\bullet$] $\dist_{(W,d\tau)}(p,q)=\inf \{\longui_{d\tau}(\alpha)
\: | \: \alpha:[0,1]\rightarrow W, \; \alpha(0)=p,\alpha(1)=q \}$, for any $p,q\in W.$
\item[$\bullet$] $\dist_{(W,d\tau)}(T_1,T_2)=\inf \{\dist_{(W,d\tau)}(p,q)
\;|\;p \in T_1, \;q \in T_2 \}$, for any $T_1, T_2 \subset  W.$
\end{enumerate}
We usually work with a domain $W$ in $M'$ and a conformal minimal
immersion $Y:\overline{W}\to\R^3.$ Then, by $ds_{Y}^2$ we mean the
Riemannian metric induced by $Y$ in $\overline{W}.$ Moreover, we
write $\dist_{(\overline{W},Y)}(T_1,T_2)$ instead of
$\dist_{(\overline{W},ds_Y)}(T_1,T_2),$ for any sets $T_1$ and
$T_2$ in $\overline{W}.$

Given $\textsc{e}\in\N$, consider
$\D_1,\ldots,\D_\textsc{e}\subset M'$ open disks such that
$\{\g_i:=\partial \D_i\}_{i=1}^{\textsc{e}}$ are analytic Jordan
curves and $\overline{\D}_i\cap \overline{\D}_j=\emptyset$ for all
$i\neq j$.

\begin{definition}\label{def: multicycle}
Each curve $\g_i$ is called a cycle on $M'$ and the family
$\mathcal{ J}=\{\g_1,\ldots,\g_\textsc{e}\}$ is called a
multicycle on $M'$. We denote by $\intc(\g_i)$ the disk $\D_i$,
for $i=1,\ldots, \textsc{e}.$ We also define $M(\mathcal{
J})=M'\setminus(\cup_{i=1}^\textsc{e} \overline{\intc(\g_i)})$.
\end{definition}
In this setting $M(\mathcal{J})$ is a hyperbolic Riemann surface
with genus $\sigma$ and $\ee$ ends. Most of our immersions will be
defined over surfaces constructed by this way.

Given $\mathcal{ J}=\{\g_1,\ldots,\g_\textsc{e}\}$ and $\mathcal{
J}'=\{\g_1',\ldots,\g_\textsc{e}'\}$ two multicycles on $M'$ we
write $\mathcal{ J}'< \mathcal{ J}$ if $\overline{\intc(\g_i)}
\subset \intc (\g_i')$ for $i=1,\ldots, \textsc{e}.$ Notice that
$\mathcal{ J}'< \mathcal{ J}$ implies $\overline{M(\mathcal{ J}')}
\subset M(\mathcal{ J})$.

Let $\mathcal{ J}=\{\g_1,\ldots,\g_\textsc{e}\}$ be a multicycle
on $M'.$ If $\ep>0$ is small enough, we can consider the
multicycle
$\mathcal{J}^\ep=\{\g_1^\ep,\ldots,\g_\textsc{e}^\ep\},$ where by
$\g_i^\ep$ we mean the cycle satisfying $\overline{\intc
(\g_i)}\subset\intc(\g_i^\ep)$ and $\dist_{(M',ds)}(q,\g_i)=\ep$
for all $q\in \g_i^\ep.$ Notice that $\J^\ep<\J.$


\subsection{Convex domains and Hausdorff distance}\label{subsec:
convex}

Given $E$ a bounded regular convex domain of $\R^3$, and
$p\in\partial E$, we let $\kappa_2(p)\geq \kappa_1(p)\geq 0$
denote the principal curvatures of $\partial E$ at $p$ associated
to the inward pointing unit normal. Moreover, we write
\[
\kappa_1(\partial E):= \min \{\kappa_1(p)\;|\;p\in\partial E\}\geq
0 .
\]
If $E$ is in addition strictly convex, then $\kappa_1(\partial
E)>0$. Recall that $E$ is strictly convex if, and only if, the
principal curvatures of $\partial E$ associated to the inward
pointing unit normal are positive everywhere.

If we consider $\mathcal{N}: \partial E \rightarrow \esf^2$ the
outward pointing unit normal or Gauss map of $\partial E$, then
there exists a constant $a>0$ (depending on $E$) such that
$\partial E_t=\{p+ t\cdot \mathcal{N}(p) \; | \; p \in \partial
E\}$ is a regular (convex) surface $\forall t \in [-a, +\infty[$.
We label $E_t$ as the convex domain bounded by $\partial E_t$.

The set $\mathcal{C}^n$ of convex bodies of $\R^n$, i.e. convex
compact sets of $\R^n$ with {non\-empty} interior, can be made into a
metric space in several geometrically reasonable ways. The
Hausdorff metric is particularly convenient and applicable. The
natural domain for this metric is the set $\mathcal{K}^n$ of the
nonempty compact subsets of $\R^n$. Given $C,D\in\mathcal{K}^n$,
the Hausdorff distance between $C$ and $D$ is defined by
\[
\de^H(C,D)=\max\left\{ \sup_{x\in C} \inf_{y\in D} \|x-y\|\;,\;
\sup_{y\in D} \inf_{x\in C} \|x-y\|\right\}.
\]


\subsection{Preliminary lemma}\label{subsec: lemma}

Next lemma was proved by Ferrer, Mart\'{i}n and the author
\cite[Lemma 5]{AFM}. However, its usefulness in the construction
of compact complete minimal immersions has been entirely developed
and exploited in this paper.

\begin{lemma}\label{lem: main-lemma}
Let $\J$ be a multicycle on $M'$, $X:\overline{M(\J)}\to\R^3$ a
conformal minimal immersion, and $p_0\in M(\J)$ with $X(p_0)=0$.
Consider $E$ a strictly convex bounded regular domain, and $E'$ a
convex bounded regular domain, with $0\in
E\subset\overline{E}\subset E'$. Let $a$ and $\ep$ be positive
constants satisfying that $p_0\in M(\J^\ep)$ and
\begin{equation}\label{eq:michicha}
X(\overline{M(\J)}\setminus M(\J^\ep))\subset E\setminus \overline{E_{-a}}.
\end{equation}

Then, for any $b>0$ there exist a multicycle $\widehat{\J}$ and a
conformal minimal immersion
$\widehat{X}:\overline{M(\widehat{\J})}\to \R^3$ with the
following properties:
\begin{enumerate}[\rm ({L}1)]
\item $\widehat{X}(p_0)=0$.

\item $\J^\ep < \widehat{\J} < \J$.

\item $1/\ep < \dist_{(\overline{M(\widehat{\J})},\widehat{X})}(p,\J^\ep)$, $\forall p\in \widehat{\J}$.

\item $\widehat{X}(\widehat{\J})\subset E'\setminus \overline{E'_{-b}}$.

\item $\widehat{X}(\overline{M(\widehat{\J})}\setminus M(\J^\ep))\subset \R^3\setminus E_{-2b-a}$.

\item $\|\widehat{X}-X\|<\ep$ in $\overline{M(\J^\ep)}$.

\item $\|\widehat{X}-X\|< {\tt m}(a,b,\ep,E,E')$ in $\overline{M(\widehat{\J})}$, where
\[
{\tt m}(a,b,\ep,E,E'):=\ep+\sqrt{\frac{2(\de^H(E,E')+a+2b)}{\kappa_1(\partial E)}+(\de^H(E,E')+a)^2}.
\]

\end{enumerate}
\end{lemma}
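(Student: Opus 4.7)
The plan is to build $\widehat{X}$ by a finite composition of L\'opez--Ros type deformations, each performed via a Runge approximation on the compact Riemann surface $M'$. I would treat the $\ee$ ends of $M(\J)$ one at a time: on each annulus $A_i:=\overline{\intc(\g_i)}\setminus\intc(\g_i^\ep)$ construct a Nadirashvili \emph{labyrinth}, namely finitely many pairwise disjoint compact arcs placed between a chain of nested cycles $\g_i^\ep=\Gamma_i^{(0)}<\Gamma_i^{(1)}<\cdots<\Gamma_i^{(N)}$ inside $A_i$, arranged so that every curve from $\Gamma_i^{(k-1)}$ to $\Gamma_i^{(k)}$ which avoids the arcs is long in the current metric. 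To each band $B_{i,k}$ between consecutive cycles one associates a unit vector $v_{i,k}\in\esf^2$, chosen so that the family $\{v_{i,k}\}_k$ is suitably distributed on $\esf^2$.

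The basic building block deforms the current immersion into a new conformal minimal immersion, defined on a slightly shrunk multicycle, that agrees with the previous one up to error $\ep/2^k$ off a neighbourhood of $B_{i,k}$, and on $B_{i,k}$ differs essentially by a large translation in the plane $v_{i,k}^\perp$ together with a small motion along $v_{i,k}$. This is obtained by modifying the Weierstrass data of the current immersion via a L\'opez--Ros transformation concentrated around the axis $v_{i,k}$ and then approximating holomorphically on $M'$ in the Runge sense. After composing the $N$ blocks for the $i$-th end, the labyrinth forces every path across $A_i$ to have $\widehat{X}$--length greater than $1/\ep$, yielding (L3); the normalisation $\widehat{X}(p_0)=0$ is preserved throughout because every deformation is supported away from $p_0\in M(\J^\ep)$, giving (L1), and (L2) holds by construction of $\widehat{\J}$.

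Strict convexity of $E$ is used precisely to bound the $\R^3$--displacement produced by these tangential kicks. Since $\partial E$ has principal curvatures $\geq \kappa_1(\partial E)>0$, a tangential displacement of length $\ell$ applied at a point of the shell $E\setminus \overline{E_{-a}}$ cannot remain inside $E'\setminus\overline{E'_{-b}}$ unless $\ell\leq \sqrt{2(\de^H(E,E')+a+2b)/\kappa_1(\partial E)}$; combining this tangential bound with the admissible normal displacement of at most $\de^H(E,E')+a$ via Pythagoras gives the square--root expression in ${\tt m}(a,b,\ep,E,E')$, while the accumulated Runge error $\sum_k \ep/2^k<\ep$ contributes the $\ep$ summand and also yields (L6), since all deformations are supported in the ends. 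On the outermost band one chooses the deformation parameter so that $\widehat{X}(\widehat{\J})$ lands in $E'\setminus\overline{E'_{-b}}$, giving (L4); on the previous bands the deformations are large enough that the corresponding images leave $E_{-2b-a}$, which is (L5).

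The main obstacle is balancing these three competing demands within one inductive scheme: the labyrinth forces large tangential kicks to achieve (L3), the estimate (L7) limits how far those kicks may move the image in $\R^3$, and the Runge approximations that localise each kick on a single band must be performed with geometrically decreasing error so that the cumulative drift from $X$ on $M(\J^\ep)$ stays below $\ep$. The strict positivity of $\kappa_1(\partial E)$ is essential here: without it the bound (L7) degenerates, which mirrors the sharpness of the main theorem observed in \cite{MMN,AN}.
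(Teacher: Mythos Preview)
The paper does not actually prove this lemma: it quotes it from \cite[Lemma~5]{AFM} and only remarks that ``the proof of Lemma~\ref{lem: main-lemma} is based on the use of a Runge's type theorem for compact Riemann surfaces and the L\'opez--Ros transformation for minimal surfaces,'' referring the reader to \cite{AFM} for details. Your sketch is precisely along those lines---Nadirashvili labyrinths in the end annuli, L\'opez--Ros deformations localised band by band via Runge approximation, and the curvature estimate $\kappa_1(\partial E)>0$ to control the extrinsic displacement and obtain the bound ${\tt m}(a,b,\ep,E,E')$---so you are following essentially the same approach as the cited source and the paper's own summary.
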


The above lemma essentially asserts that a minimal surface of
finite topology with boundary can be perturbed outside a compact
set (see (L6)) in such a way that the intrinsic diameter of the
surface grows (see (L3)), and the boundary of the resulting
surface achieves the boundary of a prescribed convex domain (see
(L4)). Moreover, the deformation keeps the perturbed part of the
surface outside another prescribed convex domain (see
\eqref{eq:michicha} and (L5)). Finally, the domain of definition
of the perturbed immersion is contained in the one of the original
immersion (see (L2)), and there is an upper bound for the
difference between both immersions in the whole domain of
definition of the deformed one (see (L7)). Let us point out that
the assumption of $E$ being strictly convex is essential in this
lemma, otherwise statement (L7) has no sense.

The proof of Lemma \ref{lem: main-lemma} is based on the use of a
Runge's type theorem for compact Riemann surfaces and the
L\'{o}pez-Ros transformation for minimal surfaces.
For a detailed proof of this lemma we refer the reader to
\cite{AFM}. We have stated it here just to make this paper
self-contained.


\section{The main theorem}\label{sec: theorem}

In this section we prove the main result of this paper and derive
some corollaries. From now on $C$ represents a strictly convex
bounded regular domain of $\R^3$.

The theorem stated in the introduction trivially follows from the
following one.

\begin{theorem}\label{th: main}
Let $C$ be a strictly convex bounded regular domain of $\R^3.$
Consider $\J$ a multicycle on the Riemann
surface $M'$ and $\phi:\overline{M(\J)}\to\overline{C}$ a
conformal minimal immersion satisfying $\phi(\J)\subset \partial
C$.

Then, for any $\mu>0,$ there exist a domain $M_\mu$ and a complete
proper conformal minimal immersion $\phi_\mu:M_\mu\to C$ such
that:
\begin{enumerate}[\rm (i)]
\item $\overline{M(\J^\mu)}\subset M_\mu\subset
\overline{M_\mu}\subset M(\J),$ and $M_\mu$ has the topological type of $M(\J).$

\item $\phi_\mu$ admits a continuous extension $\Phi_\mu:\overline{M_\mu}\to\overline{C}$
and $\Phi_\mu(\partial M_\mu)\subset\partial C.$

\item $\|\phi-\Phi_\mu\|<\mu$ in $\overline{M_\mu}.$

\item $\delta^H(\phi(\overline{M(\J)}),\Phi_\mu(\overline{M_\mu}))<\mu.$

\item $\delta^H(\phi(\J),\Phi_\mu(\partial M_\mu))<\mu.$
\end{enumerate}
\end{theorem}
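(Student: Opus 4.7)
The plan is to build $\phi_\mu$ as the limit of a sequence $\{X_n\}_{n\geq 0}$ of conformal minimal immersions obtained by iterating Lemma \ref{lem: main-lemma}, with parameters at each step chosen so that the intrinsic radii diverge (giving completeness) while the global $C^0$-displacements stay summable (giving a continuous extension to the closure).

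\emph{Initial reduction.} By the maximum principle combined with strict convexity of $C$ one has $\phi(M(\J))\subset C$, so for a sufficiently small $\mu_0\in(0,\mu)$ the restriction $X_0:=\phi|_{\overline{M(\J_0)}}$ with $\J_0:=\J^{\mu_0}$ lands in a compact subset of $C$. I would then fix a strictly convex bounded regular domain $E_0$ with $X_0(\overline{M(\J_0)})\subset\overline{E_0}\subset C$ and $X_0(\J_0)$ in a thin shell $E_0\setminus\overline{E_{0,-a_0}}$, and an exhaustion $E_0\subset\overline{E_0}\subset E_1\subset\overline{E_1}\subset\cdots\subset C$ by strictly convex bounded regular domains with $\de^H(E_n,C)\to 0$.

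\emph{Inductive step and limit.} Choose positive sequences $\ep_n,a_n,b_n\downarrow 0$ so rapidly that $1/\ep_n\to\infty$ and
\[
\mu_0+\sum_{n\geq 0}\bigl(\ep_n+{\tt m}(a_n,b_n,\ep_n,E_n,E_{n+1})\bigr)<\mu,
\]
which is possible because ${\tt m}\to 0$ as $\de^H(E_n,E_{n+1})+a_n+b_n\to 0$. Apply Lemma \ref{lem: main-lemma} at the $n$-th step with $E=E_n$, $E'=E_{n+1}$ to obtain $X_{n+1}:\overline{M(\J_{n+1})}\to\R^3$ and a multicycle $\J_{n+1}$ with $\J_n^{\ep_n}<\J_{n+1}<\J_n$ satisfying (L1)--(L7); the convex-hull property for minimal surfaces combined with (L4)--(L6) keeps the image of $X_{n+1}$ inside $\overline{E_{n+1}}\subset C$ and places $X_{n+1}(\J_{n+1})$ in the shell $E_{n+1}\setminus\overline{E_{n+1,-b_n}}$, re-establishing the \eqref{eq:michicha}-type hypothesis for the next stage. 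Set $M_\mu:=\bigcup_{n\geq 0}M(\J_n^{\ep_n})$; by (L2), and by choosing $\J_{n+1}$ so that the cycles $\g_i^n$ converge uniformly to Jordan curves bounding disks strictly inside those of $\J^\mu$, $M_\mu$ is a domain of the topological type of $M(\J)$ with $\overline{M(\J^\mu)}\subset M_\mu\subset\overline{M_\mu}\subset M(\J)$. By (L6), $\{X_n\}$ is uniformly Cauchy on every $\overline{M(\J_k^{\ep_k})}$, so $X_n$ converges locally uniformly on $M_\mu$ to a conformal minimal immersion $\phi_\mu:M_\mu\to C$; completeness follows by summing (L3) along paths exiting $M_\mu$; properness and $\Phi_\mu(\partial M_\mu)\subset\partial C$ follow from (L5) together with the convergence of the shells $E_{n+1}\setminus\overline{E_{n+1,-b_n}}$ to $\partial C$; and the global estimate (L7), summed telescopically, makes $\{X_n\}$ uniformly Cauchy on the whole $\overline{M_\mu}$, producing the continuous extension $\Phi_\mu:\overline{M_\mu}\to\overline{C}$. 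Properties (iii)--(v) then drop out of the total-displacement bound.

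\emph{Main obstacle.} The key difficulty is the simultaneous balancing of the competing parameters: the intrinsic radii $1/\ep_n$ must blow up, the exhaustion $E_n$ must fill $C$ with the shells approaching $\partial C$, and yet the quantity ${\tt m}(a_n,b_n,\ep_n,E_n,E_{n+1})$ must remain summable. This balancing is only feasible because strict convexity of $C$ (i.e.\ $\kappa_1(\partial C)>0$) gives (L7) non-trivial content — without it, no continuous boundary extension could be controlled in the limit, in accordance with the sharpness remark at the end of the introduction.
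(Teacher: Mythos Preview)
Your architecture matches the paper's almost exactly: an exhaustion of $C$ by strictly convex domains, a recursive application of Lemma~\ref{lem: main-lemma}, the domain $M_\mu$ as the increasing union of the $M(\J_n^{\ep_n})$, completeness from (L3), properness from (L4)--(L5), and the continuous boundary extension from the summability of the ${\tt m}$'s in (L7). That is precisely the paper's scheme.

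There is, however, one genuine missing ingredient. From local uniform convergence of the $X_n$ you only get that $\phi_\mu$ is a conformal \emph{harmonic} map; nothing prevents branch points, so $\phi_\mu$ need not be an immersion. Likewise, your completeness argument ``summing (L3) along paths exiting $M_\mu$'' is ill-posed as stated: (L3) bounds intrinsic distances in the metric $ds_{X_{n+1}}$, a different metric at each step, and there is no way to add these contributions without a comparison to a single reference metric. The paper fixes both issues with the same device: at each step it does not accept the first output of the lemma, but runs it for a sequence $\widehat\ep_m\downarrow 0$ and chooses $m_0$ so that the resulting immersion satisfies
\[
ds_{X_{n+1}}\ \geq\ \alpha_{n+1}\, ds_{X_n}\quad\text{on }\overline{M(\J_n^{\xi_n})},\qquad \alpha_k=e^{-1/2^k}\ (k\geq 2),
\]
so that $\prod_k\alpha_k=1/2$. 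This gives $ds_{\phi_\mu}\geq \tfrac12\, ds_\phi>0$ on $M_\mu$ (hence $\phi_\mu$ is an immersion) and lets one transfer the (L3) lower bounds to the limit metric to conclude completeness. You should add this metric-comparison step to your recursion; without it the two central conclusions (immersion and completeness) are not justified.

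A smaller organisational point: the constants $\ep_n$ cannot be fixed in advance of the recursion, since at stage $n$ the threshold $\ep_{n+1}$ must also be smaller than a $\lambda>0$ depending on $X_n$ (to guarantee the shell hypothesis \eqref{eq:michicha}) and small enough to trigger the metric comparison above. This does not break your summability bookkeeping---shrinking $\ep_n$ only decreases ${\tt m}$---but the choice must be made step by step.
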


The proof of the above theorem consists roughly of the following.
First we look for an exhaustion sequence $\{E^n\}_{n\in\N}$ of
strictly convex bounded regular domains covering $C$. Then we use
Lemma \ref{lem: main-lemma} in a recursive way in order to
construct a sequence of minimal immersions $\{X_n\}_{n\in\N}$,
starting at $X_1=\phi$. To construct the immersion $X_{n+1}$ we
apply the lemma to the data $X=X_n$, $E=E^n$, $E'=E^{n+1}$ and
constants $a=b_n$, $b=b_{n+1}$ and $\ep=\ep_{n+1}$. These
constants and the convex domains $\{E^n\}_{n\in\N}$ are suitably
chosen so that the sequence $\{X_n\}_{n\in\N}$ has a limit
immersion $\phi_\mu$ which satisfies the conclusion of Theorem
\ref{th: main}. The completeness of $\phi_\mu$ is derived from
property (L3) of Lemma \ref{lem: main-lemma}. The properness in
$C$ follows from (L4) and (L5). Finally, to guarantee that the
immersion $\phi_\mu$ is compact we use property (L7).

\subsection{Proof of Theorem \ref{th: main}}

Assume $\J=\{\g_1,\ldots,\g_\ee\}$. First of all, we define a
positive constant $\ve<\mu/2$. In order to do it, consider
$\mathcal{T}(\g_i)$ a tubular neighborhood of $\g_i$ in
$\overline{M(\J)}$, and denote by ${\tt
P}_i:\mathcal{T}(\g_i)\to\g_i$ the natural projection,
$i=1,\ldots,\ee$. Choose $\ve>0$ small enough so that
$\overline{M(\J)}\setminus M(\J^\ve) \subset\cup_{i=1}^\ee
\mathcal{T}(\g_i),$ and
\begin{equation}\label{equ: ve<mu 2}
\|\phi(p)-\phi({\tt P}_i(p))\|<\frac{\mu}2,
\end{equation}
for any $p\in ( \overline{M(\J)}\setminus M(\J^\ve) )\cap
\mathcal{T}(\g_i)$, $i=1,\ldots,\ee$. This choice is possible
since the uniform continuity of $\phi$. The definition of $\ve$ is
nothing but a trick to obtain statements (iv) and (v) from
statement (iii).

Now, let us describe how to define the family $\{E^n\}_{n\in\N}$
of convex sets. Consider $t_0>0$ small enough so that, for any
$t\in]0,t_0[$,
\begin{enumerate}[\rm 1]
\item[$\bullet$] $C_t$ is a well defined strictly convex bounded regular domain.
\item[$\bullet$] $\Gamma_t:=\phi^{-1}((\partial C_t)\cap \phi(M(\J)))$ is a multicycle on $M'$.
\end{enumerate}
Let $c_1$ be a positive constant (which will be specified later)
small enough so that
\[
c_1^2\cdot \sum_{k\geq 1}\frac1{k^4}<\min\{t_0,\ve\},
\]
and define, for any natural $n$,
\begin{equation}\label{equ: t}
t_n:=c_1^2\cdot\sum_{k\geq n} \frac{1}{k^4}.
\end{equation}
Then, $\forall n\in\N$, we consider the strictly convex bounded
regular domain
\begin{equation}\label{equ: E}
E^n:=C_{-t_n}.
\end{equation}
Notice that $E^n\subset E^{n+1}$, $\forall n\in\N$, and
$\cup_{n\in\N}E^n=C.$ Furthermore, from \eqref{equ: t} and
\eqref{equ: E}, the Hausdorff distance between $E^n$ and $E^{n+1}$
is known. In fact
\begin{equation}\label{eq:anita}
\de^H(E^{n-1},E^n)=\frac{c_1^2}{n^4},\quad \forall n\in\N.
\end{equation}

Finally, consider a decreasing sequence of positives
$\{b_n\}_{n\in\N}$ satisfying
\begin{equation}\label{equ: tesis-7.7}
b_1<2(t_0-t_1),\quad \text{and}\quad b_n<\frac{c_1^2}{n^4}.
\end{equation}
These numbers will take the role of the constants $a$ and $b$ of
Lemma \ref{lem: main-lemma} in the recursive process.

The next step consists of using Lemma \ref{lem: main-lemma} to
construct, for any $n\in\N$, a family
$\chi_n=\{\J_n,X_n,\ep_n,\xi_n\}$,
where
\begin{enumerate}[\rm 1]
\item[$\bullet$] $\J_n$ is a multicycle on $M'$.

\item[$\bullet$] $X_n:\overline{M(\J_n)}\to C$ is a conformal minimal immersion.

\item[$\bullet$] $\{\ep_n\}_{n\in\N}$ and $\{\xi_n\}_{n\in\N}$
are decreasing sequences of positive real numbers with
\begin{equation}\label{equ: tesis-7.8}
\xi_n<\ep_n<\frac{c_1}{n^2}.
\end{equation}
\end{enumerate}

Moreover, the sequence of families $\{\chi_n\}_{n\in\N}$ must
satisfy the following list of properties:
\begin{enumerate}[\rm (A$_{n}$)]
\item $\J^\ve<\J_{n-1}^{\xi_{n-1}}< \J_{n-1}^{\ep_n}< \J_n^{\xi_n}<\J_n<\J_{n-1}$.

\item $1/\ep_n< \dist_{(\overline{M(\J_n^{\xi_n})},X_n)}(\J_{n-1}^{\xi_{n-1}}, \J_n^{\xi_n})$.

\item $\|X_n-X_{n-1}\|<\ep_n$ in $\overline{M(\J_{n-1}^{\ep_n})}$.

\item $ds_{X_n}\geq \alpha_n\cdot ds_{X_{n-1}}$ in $\overline{M(\J_{n-1}^{\xi_{n-1}})}$,
where the sequence $\{\a_k\}_{k\in\N}$ is given by
\[
\a_1:=\frac12\, e^{1/2},\quad \a_k:=e^{-1/2^k}\text{ for }k>1.
\]
Notice that $0<\a_k<1$ and $\{\prod_{m=1}^k \a_m\}_{k\in\N}$
converges to $1/2$.

\item $X_n(p)\in E^n\setminus \overline{(E^n)_{-b_n}}$, for any $p\in \J_n$.

\item $X_n(p)\in\R^3\setminus (E^{n-1})_{-b_{n-1}-2b_n}$, for any $p\in \overline{M(\J_n)}\setminus M(\J_{n-1}^{\ep_n})$.

\item $\|X_n-X_{n-1}\|<{\tt m}(b_{n-1},b_n,\ep_n,E^{n-1},E^n)$ in $\overline{M(\J_n)}$, where ${\tt m}$ is the map defined in Lemma \ref{lem: main-lemma}.

\end{enumerate}

The sequence $\{\chi_n\}_{n\in\N}$ is constructed in a recursive
way. To define $\chi_1$, we choose $X_1=\phi$ and
$\J_1=\Gamma_{-t_1-b_1/2}$. The first inequality of \eqref{equ:
tesis-7.7} guarantees that $\J_1$ is well defined. From this
choice we conclude that $X_1(\J_1)\subset \partial
C_{-t_1-b_1/2}\subset E^1\setminus\overline{(E^1)_{-b_1}}$, and so
property (E$_1$) holds. Then, we take $\ep_1$ and $\xi_1$
satisfying \eqref{equ: tesis-7.8} and being $\xi_1$ small enough
so that $\J^\ve<\J_1^{\xi_1}$. The remainder properties of the
family $\chi_1$ do not make sense. The definition of $\chi_1$ is
done.

Now, assume that we have constructed the families
$\chi_1,\ldots,\chi_n$ satisfying the desired properties. Let us
show how to construct $\chi_{n+1}$. First of all, notice that
property (E$_n$) guarantees the existence of a positive constant
$\l$ such that $X_n(\overline{M(\J_n)}\setminus
M(\J_n^{\l}))\subset E^n\setminus \overline{(E^n)_{-b_n}}$. Then,
Lemma \ref{lem: main-lemma} can be applied to the data
\[
\J=\J_n,\quad X=X_n,\quad E=E^n,\quad E'=E^{n+1}, \quad
a=b_n,\quad\ep, \quad b=b_{n+1},
\]
for any $0<\ep<\l$. Now, consider a sequence of positives
$\{\widehat{\ep}_m\}_{m\in\N}$ decreasing to zero and such that
\begin{equation}\label{equ: ep-gorro}
\widehat{\ep}_m<\min \{\l,\xi_n,c_1/(n+1)^2\},\quad \text{for any
$m\in\N$}.
\end{equation}
Consider $\I_m$ and $Y_m:\overline{M(\I_m)}\to \R^3$ the
multicycle and the conformal minimal immersion given by Lemma
\ref{lem: main-lemma} for the above data and
$\ep=\widehat{\ep}_m$. Statement (L2) in Lemma \ref{lem:
main-lemma} and \eqref{equ: ep-gorro} imply that
\begin{equation}\label{equ: gafa-7.2}
\J_n^{\xi_n}<\J_n^{\widehat{\ep}_m}<\I_m,\quad \text{for any
$m\in\N$}.
\end{equation}
Taking \eqref{equ: gafa-7.2} into account, (L6) guarantees that
the sequence $\{Y_m\}_{m\in\N}$ converges to $X_n$ uniformly in
$\overline{M(\J_n^{\xi_n})}$. In particular, the sequence of
metrics $\{ds_{Y_m}\}_{m\in\N}$ converges to $ds_{X_n}$ uniformly
in $\overline{M(\J_n^{\xi_n})}$. Therefore, there exists
$m_0\in\N$ large enough so that
\begin{equation}\label{equ: gafa-7.3}
ds_{Y_{m_0}}\geq \a_{n+1}\cdot ds_{X_n}\quad \text{in
$\overline{M(\J_n^{\xi_n})}$}.
\end{equation}
Define $\J_{n+1}:=\I_{m_0}$, $X_{n+1}:=Y_{m_0}$, and
$\ep_{n+1}:=\widehat{\ep}_{m_0}$. From \eqref{equ: gafa-7.2} and
statement (L3) in Lemma \ref{lem: main-lemma} we deduce that
\[
1/\ep_{n+1}<\dist_{(\overline{M(\J_{n+1})},X_{n+1})}(\J_{n+1},\J_n^{\xi_n}).
\]
Then, the above equation and \eqref{equ: gafa-7.2} imply the
existence of a positive $\xi_{n+1}$ small enough so that
\eqref{equ: tesis-7.8}, (A$_{n+1}$) and (B$_{n+1}$) hold.
Properties (C$_{n+1}$), (D$_{n+1}$), (E$_{n+1}$), (F$_{n+1}$) and
(G$_{n+1}$) follow from (L6), \eqref{equ: gafa-7.3}, (L4), (L5)
and (L7), respectively. The definition of $\chi_{n+1}$ is done.

In this way the sequence $\{\chi_n\}_{n\in\N}$ satisfying the
desired properties has been constructed.

The next step consists of defining the domain of definition of the
immersion which satisfies the conclusion of the theorem. Consider
\[
M_\mu:=\bigcup_{n\in\N}M(\J_n^{\ep_{n+1}})=\bigcup_{n\in\N}M(\J_n^{\xi_n}).
\]
Since (A$_n$), $n\in\N$, the set $M_\mu$ is an expansive union of
domains with the same topological type as $M(\J)$. Therefore,
elementary topological arguments give that $M_\mu$ is a domain
with the same topological type as $M(\J)$. Furthermore, (A$_n$),
$n\in\N$, also imply that
\begin{equation}\label{equ: cierre}
\overline{M_\mu}=\bigcap_{n\in\N} \overline{M(\J_n)}.
\end{equation}

Now we specify the constant $c_1$. We take $c_1$ small enough so
that
\begin{equation}\label{equ: suma m}
\sum_{n=2}^\infty {\tt
m}(b_{n-1},b_{n},\ep_{n},E^{n-1},E^{n})<\ve.
\end{equation}
Let us show that this choice is possible. Indeed, the strictly
convexity of $C$ guarantees that
\begin{equation}\label{equ: kappa}
\kappa_1(\partial E^{n-1})>\kappa_1(\partial C),\quad \forall
n\geq 2.
\end{equation}
Then, taking into account equation \eqref{eq:anita} and
inequalities \eqref{equ: tesis-7.7}, \eqref{equ: tesis-7.8} and
\eqref{equ: kappa} we conclude that
\[
{\tt m}(b_n,b_{n+1},\ep_{n+1},E^n,E^{n+1})<
 \frac{c_1}{n^2}
\left(1+2\sqrt{\frac{c_1^2}{n^4}+\frac2{\kappa_1(\partial C
)}}\right) < c_1\cdot \frac{\mathrm{c}}{n^2},\quad \forall n\in\N,
\]
where ${\rm c}$ is a constant which depends only on $C$.
Therefore, it can be assumed that $c_1$ was chosen small enough so
that \eqref{equ: suma m} holds.

Finally we define the desired immersion $\phi_\mu$. Taking into
account \eqref{equ: cierre}, \eqref{equ: suma m} and properties
(G$_n$), $n\in\N$, we infer that $\{X_n\}_{n\in\N}$ is a Cauchy
sequence uniformly in $\overline{M_\mu}$ of continuous maps.
Hence, it converges to a continuous map
$\Phi_\mu:\overline{M_\mu}\to \R^3$. Define
$\phi_\mu:=(\Phi_\mu)_{|M_\mu}:M_\mu\to\R^3$.

Let us check that $\phi_\mu$ satisfies the conclusion of the
theorem.

\noindent $\bullet$ Properties (D$_n$), $n\in\N$, guarantee that
$\phi_\mu$ is a conformal minimal immersion.

\noindent $\bullet$ The completeness of $\phi_\mu$ follows from
properties (B$_n$), (D$_n$), $n\in\N$, and the fact that the
sequence $\{1/\ep_n\}_{n\in\N}$ diverges.

\noindent $\bullet$ The properness of $\phi_\mu$ in $C$ is
equivalent to the fact that $\Phi_\mu(\partial M_\mu)\subset
\partial C$. Let us check it. Consider $p\in \partial M_\mu$. For
any $n\in\N$, let $p_n$ be a point in $M(\J_n^{\xi_n})$ such that
the sequence $\{p_n\}_{n\in\N}$ converges to $p$. This sequence of
points trivially exists since the definition of $M_\mu$. Fix
$k\in\N$. Then, the convex hull property for minimal surfaces and
(E$_n$) imply that $X_n(p_k)\in E^n$, for any $n\geq k$. Taking
limits as $n\to\infty$ we obtain that $\Phi_\mu (p_k)\in
\overline{C}$. Now, taking limits as $k\to\infty$, we conclude
that $\Phi_\mu(p)\in\overline{C}$. On the other hand,
$p\in\partial M_\mu\subset \overline{M(\J_n)}\setminus
M(\J_{n-1}^{\ep_n})$, $\forall n\in\N$. Again, fix $k\in\N$.
Properties (F$_n$), $n\in\N$, imply that $X_n(p)\in\R^3\setminus
(E^{k-1})_{-b_{k-1}-2b_k}$, for any $n>k$. Taking limits as
$n\to\infty$ we have that $\Phi_\mu(p)\in \overline{C}\setminus
(E^{k-1})_{-b_{k-1}-2b_k}$. Therefore, $\Phi_\mu(p)\in
\overline{C}\setminus (\cup_{k\in\N}
(E^{k-1})_{-b_{k-1}-2b_k})=\overline{C}\setminus C=\partial C$.


\noindent $\bullet$ Statement (i) follows from (A$_n$), $n\in\N$.

\noindent $\bullet$ Statement (ii) trivially holds.

\noindent $\bullet$ Taking into account \eqref{equ: cierre},
\eqref{equ: suma m} and properties (G$_n$), $n\in\N$, we conclude
that \begin{equation}\label{equ: <ve} \|\phi-\Phi_\mu\|<\ve\quad
\text{in $\overline{M_\mu}$}.
\end{equation}
This inequality implies statement (iii).

\noindent $\bullet$ Inequality \eqref{equ: ve<mu 2} implies that
$\de^H( \phi(M(\J^\ve))\,,\,\phi(\overline{M(\J)}) )<{\mu}/2.$
Then, to prove statement (iv) we use \eqref{equ: <ve}, the fact
that $M(\J^\ve)\subset\overline{M_\mu}\subset \overline{M(\J)}$
and the above inequality in the following way:
\begin{multline*}
\de^H\big( \Phi_\mu(\overline{M_\mu}) \;,\; \phi(\overline{M(\J)}) \big)< \de^H\big( \Phi_\mu(\overline{M_\mu}) \;,\; \phi(\overline{M_\mu}) \big) +
\\
 \de^H\big( \phi(\overline{M_\mu}) \;,\; \phi(\overline{M(\J)}) \big)<
\ve+ \de^H\big( \phi(M(\J^\ve)) \;,\; \phi(\overline{M(\J)}) \big)<\ve+\frac{\mu}2<\mu.
\end{multline*}

\noindent $\bullet$ Finally, let us check statement (v). Consider
$p\in \partial M_\mu$. Let $i\in\{1,\ldots,\ee\}$ such that $p\in
\mathcal{T}(\g_i)$ and label $q={\tt P}_i(p)\in\J$. Then
\[
\|\Phi_\mu(p)-\phi(q)\| < \|\Phi_\mu(p)-\phi(p)\|+ \|\phi(p)-\phi(q)\|<\ve+\frac{\mu}2<\mu,
\]
where we have used \eqref{equ: <ve} and \eqref{equ: ve<mu 2}. On
the other hand, given $q\in \J$ we can find a point $p\in
\partial M_\mu$ such that $q={\tt P}_i(p)$ for some
$i\in\{1,\ldots,\ee\}.$ The above computation gives
$\|\Phi_\mu(p)-\phi(q)\|<\mu$. In this way we have proved (v).

The proof of Theorem \ref{th: main} is done.


\subsection{Some consequences of Theorem \ref{th: main}}\label{subsec: coro}

In this subsection we remark some results that follow straightforwardly from Theorem \ref{th: main}.

The first one is a density type theorem. Given $C$ a strictly
convex bounded regular domain of $\R^3$, the set of finite
families of curves in $\partial C$ spanned by complete (connected)
minimal surfaces is dense in the set of finite families of curves
in $\partial C$ spanned by (connected) minimal surfaces, with the
Hausdorff metric.

\begin{corollary}\label{cor: Jordan}
Let $\Sigma$ be a finite family of closed curves in $\partial C$ so that
the Plateau problem for $\Sigma$ admits a solution. Then, for any
$\xi>0,$ there exist a compact Riemann surface $\mathcal{M},$ an open domain $M\subset \mathcal{M}$ and a continuous
map $\Phi:\overline{M}\to\overline{C}$ such that
\begin{enumerate}[1]
\item[$\bullet$] $\Phi_{|M}:M\to C$ is a conformal complete proper minimal immersion.

\item[$\bullet$] $\de^H(\Sigma,\Phi(\partial M))<\xi.$
\end{enumerate}
\end{corollary}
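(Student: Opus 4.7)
The plan is to reduce the statement directly to Theorem \ref{th: main}. By hypothesis, $\Sigma$ admits a solution to the Plateau problem, so there exists a conformal minimal immersion $\phi_0\colon\overline{M_0}\to\overline{C}$ from a compact bordered Riemann surface $M_0$ with $\phi_0(\partial M_0)=\Sigma$. I would realize $M_0$ as a subdomain of a closed Riemann surface $M'$ (for instance, by taking the Schottky double of $M_0$), identifying $\partial M_0$ with a family of pairwise disjoint analytic Jordan curves $\g_1,\ldots,\g_\ee$ on $M'$. Then $\J:=\{\g_1,\ldots,\g_\ee\}$ is a multicycle on $M'$ in the sense of Definition \ref{def: multicycle}, and $\overline{M_0}=\overline{M(\J)}$. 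The resulting map $\phi\colon\overline{M(\J)}\to\overline{C}$ is a conformal minimal immersion satisfying $\phi(\J)=\Sigma\subset\partial C$, which are precisely the hypotheses of Theorem \ref{th: main}.

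Next I would apply Theorem \ref{th: main} with parameter $\mu:=\xi$ (or any $\mu<\xi$) to obtain a domain $M_\mu\subset M(\J)$ having the topological type of $M(\J)$, and a conformal complete proper minimal immersion $\phi_\mu\colon M_\mu\to C$ which admits a continuous extension $\Phi_\mu\colon\overline{M_\mu}\to\overline{C}$ with $\Phi_\mu(\partial M_\mu)\subset\partial C$. Setting $\mathcal{M}:=M'$, $M:=M_\mu$ and $\Phi:=\Phi_\mu$, the first bullet of the corollary is immediate. The second bullet follows from property (v) of Theorem \ref{th: main}, since
\[
\de^H\bigl(\Sigma,\Phi(\partial M)\bigr)=\de^H\bigl(\phi(\J),\Phi_\mu(\partial M_\mu)\bigr)<\mu\leq\xi.
\]

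The only subtlety, and the point where the most care is needed, is the preliminary step of presenting the Plateau solution as a conformal minimal immersion on $\overline{M(\J)}$ for a \emph{multicycle} $\J$ as in Definition \ref{def: multicycle}, i.e.\ with pairwise disjoint \emph{analytic} Jordan boundary cycles. When $\Sigma$ is a family of analytic Jordan curves this is guaranteed by the classical boundary regularity theory for minimal surfaces, and in the general continuous case it can be handled by first approximating $\Sigma$ in the Hausdorff metric by an analytic family $\Sigma'$ with $\de^H(\Sigma,\Sigma')<\xi/2$ that still bounds a minimal surface, and then invoking Theorem \ref{th: main} with $\mu:=\xi/2$. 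Once the initial data are placed in the form required by the theorem, the corollary is nothing more than a repackaging of its conclusions.
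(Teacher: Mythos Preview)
Your proposal is correct and is exactly the approach the paper has in mind: the paper presents Corollary~\ref{cor: Jordan} as following ``straightforwardly'' from Theorem~\ref{th: main} and gives no further argument, so what you have written is precisely the intended deduction via statement~(v). Your handling of the preliminary step (embedding the bordered surface in a compact Riemann surface so that the boundary becomes an analytic multicycle) is also in line with the paper, which invokes \cite{AS} for this purpose just after the corollary.
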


The following result shows that compact complete proper minimal
immersions in $C$ are not rare. Recall that any Riemann surface
with finite topology and analytic boundary can be seen as the
closure of an open region of a compact Riemann surface \cite{AS}.

\begin{corollary}\label{cor: densidad}
The family of complete minimal surfaces spanning a set of closed
curves in $\partial C$ is dense in the space of minimal surfaces
spanning a finite set of closed curves in $\partial C$, endowed
with the topology of the Hausdorff distance.
\end{corollary}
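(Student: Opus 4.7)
The plan is to deduce Corollary \ref{cor: densidad} directly from Theorem \ref{th: main}, with the only nontrivial input being a standard reparametrization step so that the hypothesis of that theorem is met.

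Suppose $S$ is a minimal surface spanning a finite set of closed curves $\Sigma\subset\partial C$, and fix $\xi>0$. By assumption $S$ is the image of a conformal minimal immersion $\psi$ from a compact bordered Riemann surface $\overline{N}$ of finite topology, continuously extended so that $\psi(\partial N)=\Sigma$. Using the same Ahlfors--Sario-type result cited in the remark preceding the corollary, I would first re-express $\overline{N}$ as $\overline{M(\J)}$ for some compact Riemann surface $M'$ and some analytic multicycle $\J$ on $M'$; this recasts $\psi$ as a conformal minimal immersion $\phi:\overline{M(\J)}\to\overline{C}$ whose continuous extension satisfies $\phi(\J)=\Sigma\subset\partial C$. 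At this point the hypotheses of Theorem \ref{th: main} are precisely in force.

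Next I would apply Theorem \ref{th: main} to $\phi$ with $\mu:=\xi$. This produces a domain $M_\mu$ of the same topological type as $M(\J)$, together with a complete proper conformal minimal immersion $\phi_\mu:M_\mu\to C$ continuously extending to $\Phi_\mu:\overline{M_\mu}\to\overline{C}$ with $\Phi_\mu(\partial M_\mu)\subset\partial C$. Since $\partial M_\mu$ is the union of finitely many Jordan curves (by (i)), the set $\Phi_\mu(\partial M_\mu)$ is automatically a finite family of closed curves in $\partial C$, so $S':=\Phi_\mu(\overline{M_\mu})$ is a complete minimal surface spanning such a family. Conclusion (iv) of Theorem \ref{th: main} then yields
\[
\de^H(S,S')=\de^H\bigl(\phi(\overline{M(\J)}),\,\Phi_\mu(\overline{M_\mu})\bigr)<\xi,
\]
which is exactly the asserted density.

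The only part of the argument that is not a direct citation is the analytic-multicycle reparametrization in the first step, and I expect this to be the main obstacle to a fully rigorous write-up. It is, however, standard: any compact bordered Riemann surface of finite topology can be uniformized so as to sit inside its Schottky double as the closure of an open region with analytic boundary, a passage which leaves the minimal immersion and its continuous boundary values intact. Once this reparametrization is in hand, the rest of the proof is a one-line appeal to the main theorem.
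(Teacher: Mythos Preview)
Your argument is correct and matches the paper's intended approach: the paper does not write out a proof of this corollary, but the sentence immediately preceding it (citing \cite{AS} for the fact that any bordered Riemann surface of finite topology embeds as $\overline{M(\J)}$ in a compact Riemann surface) is precisely your reparametrization step, after which the result is a direct application of Theorem~\ref{th: main} via conclusion~(iv). In short, you have filled in exactly the details the paper leaves implicit.
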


Theorem \ref{th: main} can be seen as an improvement of the
following result \cite[Theorem 3]{AFM}.

\begin{corollary}\label{cor: gafa-teo-3}
Let $C$ be a strictly convex bounded regular domain of $\R^3$.
Consider $\J$ a multicycle on the Riemann surface $M'$ and
$\vp:\overline{M(\J)}\to \overline{C}$ a conformal minimal
immersion satisfying $\vp(\J)\subset\partial C$.

Then, for any $\ep>0$, there exists a subdomain $M_\ep$ with the
same topological type as $M(\J)$, $\overline{M(\J^\ep)}\subset
M_\ep\subset \overline{M_\ep}\subset M(\J)$, and a complete proper
conformal minimal immersion $\vp_\ep:M_\ep\to C$ so that
\[
\|\vp-\vp_\ep\|<\ep,\quad \text{in $M_\ep$.}
\]
\end{corollary}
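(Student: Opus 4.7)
The plan is to derive the corollary as an immediate specialization of Theorem \ref{th: main}. The data in the corollary---a strictly convex bounded regular domain $C\subset\R^3$, a multicycle $\J$ on $M'$, and a conformal minimal immersion $\vp:\overline{M(\J)}\to\overline{C}$ with $\vp(\J)\subset\partial C$---are precisely the hypotheses of Theorem \ref{th: main}. Given $\ep>0$, I would apply Theorem \ref{th: main} with $\phi:=\vp$ and $\mu:=\ep$ to obtain a domain $M_\mu\subset M(\J)$ and a complete proper conformal minimal immersion $\phi_\mu:M_\mu\to C$ satisfying properties (i)--(v), together with a continuous extension $\Phi_\mu:\overline{M_\mu}\to\overline{C}$. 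I then set $M_\ep:=M_\mu$ and $\vp_\ep:=\phi_\mu$.

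It remains only to read off the conclusion. The preservation of topological type and the inclusion $\overline{M(\J^\ep)}\subset M_\ep\subset\overline{M_\ep}\subset M(\J)$ are exactly what (i) provides. Completeness, properness, conformality and minimality of $\vp_\ep$ on $M_\ep$ are stated verbatim in Theorem \ref{th: main}. The uniform estimate $\|\vp-\vp_\ep\|<\ep$ on $M_\ep$ follows from (iii), which in fact delivers the stronger inequality $\|\vp-\Phi_\mu\|<\ep$ on the whole of $\overline{M_\mu}$; restricting to $M_\ep$ gives the required bound.

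There is no substantive obstacle to address: the corollary is strictly weaker than Theorem \ref{th: main}, discarding precisely the continuous extension to $\overline{M_\ep}$ with boundary image in $\partial C$ and the two Hausdorff-distance estimates (iv) and (v), which are the main novelty of the present paper relative to \cite{AFM}. Thus the proof is a one-line deduction, and its only purpose in the exposition is to record how the new theorem recovers the previously known existence result of Ferrer, Mart\'{i}n and the author.
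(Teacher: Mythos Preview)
Your proposal is correct and matches the paper's approach exactly: the paper gives no explicit proof of this corollary, presenting it simply as an immediate consequence of Theorem \ref{th: main}, which is precisely what you do by taking $\mu=\ep$ and reading off items (i) and (iii).
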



\subsection{The case of the disk}\label{subsec: disk}

As we remark in the introduction of this paper, any Jordan curve
in $\R^3$ can be spanned by a minimal disk. This fact allows us to
improve the preceding results in the simply-connected case.

\begin{corollary}\label{th: MM-2}
Let $C$ be a strictly convex bounded regular domain of $\R^3$. For any smooth Jordan curve $\g\subset \partial C$ and for any $\ep>0$ there exists a compact complete proper minimal immersion $\phi_{(\g,\ep)}:\D\to C$ such that $\de^H(\phi_{(\g,\ep)}(\partial\D),\g)<\ep$.
\end{corollary}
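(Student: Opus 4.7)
The strategy is to combine Corollary \ref{cor: Jordan} (equivalently, Theorem \ref{th: main}) with the classical solvability of the Plateau problem.

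By the Douglas--Rad\'o theorem, the smooth Jordan curve $\g\subset\partial C$ bounds a conformal minimal immersion of $\overline{\D}$, so $\Sigma:=\{\g\}$ admits a solution to the Plateau problem. Reading such a solution as a conformal minimal immersion $\phi:\overline{\D}\to\overline{C}$ with $\phi(\partial\D)=\g$ (the convex hull property automatically confines the image to $\overline{C}$), and placing $\overline{\D}\subset\esf^2$ with $\J=\{\partial\D\}$ a single analytic cycle, the data $(\J,\phi)$ meet the hypotheses of Theorem \ref{th: main} on $M(\J)=\D$.

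Apply Theorem \ref{th: main} with $\mu=\ep/2$. This yields a simply connected subdomain $M_{\ep/2}\subset\D$ and a complete proper conformal minimal immersion $\phi_{\ep/2}:M_{\ep/2}\to C$ extending continuously to $\Phi_{\ep/2}:\overline{M_{\ep/2}}\to\overline{C}$, with $\Phi_{\ep/2}(\partial M_{\ep/2})\subset\partial C$ and, by conclusion (v), $\de^H(\g,\Phi_{\ep/2}(\partial M_{\ep/2}))<\ep/2$. Since $M_{\ep/2}$ is a bounded simply connected open subset of $\C$, the Riemann mapping theorem supplies a biholomorphism $h:\D\to M_{\ep/2}$; setting $\phi_{(\g,\ep)}:=\phi_{\ep/2}\circ h:\D\to C$ transfers conformality, completeness of the induced metric, and properness into $C$, as all three are preserved by biholomorphic reparametrisation.

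The main obstacle is upgrading $\phi_{(\g,\ep)}$ to a \emph{compact} immersion in the sense of Definition \ref{def: compact}: producing a continuous extension $\overline{\D}\to\overline{C}$ and verifying $\de^H(\phi_{(\g,\ep)}(\partial\D),\g)<\ep$. The Riemann map $h$ need not extend continuously to $\partial\D$, since $\partial M_{\ep/2}=\bigcap_n\overline{M(\J_n)}\setminus\bigcup_n M(\J_n^{\xi_n})$ is built as a nested intersection of analytic cycles and may fail to be locally connected, so Carath\'eodory's prime end theorem does not apply to $h$ directly. What saves the argument is that only the composition $\Phi_{\ep/2}\circ h$ must extend: the Hausdorff bound (v), together with the recursive controls (E$_n$), (F$_n$), (G$_n$) in the proof of Theorem \ref{th: main}, forces every cluster value of $\Phi_{\ep/2}\circ h$ at a point of $\partial\D$ to lie on $\partial C$ within $\ep/2$ of $\g$, and a careful bookkeeping of the recursive choices of $\{\ep_n\}$, $\{\xi_n\}$, $\{b_n\}$ collapses these cluster sets to single points, delivering the continuous extension together with the Hausdorff bound $\de^H(\phi_{(\g,\ep)}(\partial\D),\g)<\ep$ simultaneously.
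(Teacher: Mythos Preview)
Your core strategy---solve the Plateau problem for $\g$ via Douglas--Rad\'o to obtain $\phi:\overline{\D}\to\overline{C}$ with $\phi(\partial\D)=\g\subset\partial C$, then invoke Theorem~\ref{th: main}---is precisely the paper's intended argument, which is why the corollary is stated without proof.

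The gap is self-inflicted: the Riemann map step is unnecessary, and your attempted repair of it is not a proof. Recall Definition~\ref{def: compact}: a compact minimal immersion is one whose domain $M$ is an open region of \emph{some} compact Riemann surface and which extends continuously to $\overline{M}$. The domain $M_\mu$ produced by Theorem~\ref{th: main} already sits inside the compact surface $M'$ (here $\esf^2$), and $\Phi_\mu:\overline{M_\mu}\to\overline{C}$ is already the required continuous extension with $\Phi_\mu(\partial M_\mu)\subset\partial C$ and $\de^H(\g,\Phi_\mu(\partial M_\mu))<\mu$. Thus $(M_\mu,\phi_\mu)$ is itself the compact complete proper minimal immersion of disk type asserted in the corollary; the symbol $\D$ there denotes a simply connected domain, consistent with the phrasing of the main theorem in the introduction and of Corollary~\ref{cor: Jordan}. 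No uniformisation is required.

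By contrast, your last paragraph asserts without justification that ``careful bookkeeping'' of the constants $\{\ep_n\},\{\xi_n\},\{b_n\}$ collapses the cluster sets of $\Phi_\mu\circ h$ at boundary points to singletons. Nothing in properties (E$_n$), (F$_n$), (G$_n$) controls the geometry of $\partial M_\mu$: the impression of a prime end of $M_\mu$ may well be a nondegenerate continuum in $\partial M_\mu$, and $\Phi_\mu$ could map it to a nontrivial arc on $\partial C$. Proving otherwise would require either showing $\partial M_\mu$ is locally connected (so Carath\'eodory applies) or building this into the recursive construction---neither is done in the paper, and neither is needed once you drop the superfluous reparametrisation.
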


Now, we can prove the following existence result.

\begin{corollary}\label{cor: MM-3}
Let $D$ be a domain of $\R^3$ with boundary. Assume $\partial D$
contains a regular open connected region $A$ such that the mean
and Gauss curvatures are positive on $A$. Then, there exists a
compact complete proper minimal immersion $\phi:\D\to D$.
\end{corollary}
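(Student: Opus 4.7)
The strategy is to reduce to Corollary \ref{th: MM-2}. Fix $p\in A$. The positivity of the mean and Gauss curvatures on $A$ means both principal curvatures of $\partial D$ with respect to the inward unit normal are positive throughout $A$, so $A$ is locally strictly convex. The key step is the construction of a strictly convex bounded regular domain $C\subset\overline{D}$ such that $\partial C\cap\partial D$ contains a two-dimensional open neighborhood $U$ of $p$ in $A$.

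To build $C$, work in a local chart where $\partial D$ is the graph of a smooth strictly convex function $f$ over a small disk in $\R^2$, with $p$ at the origin and $D$ corresponding to $\{x_3>f\}$. Extend $f$ (on a smaller disk) to a smooth strictly convex $\tilde f:\R^2\to\R$ of superlinear growth, and choose a smooth strictly concave $G:\R^2\to\R$ with $G(0)>f(0)$ and $G<\tilde f$ outside a disk slightly larger than $\{\tilde f\le G(0)\}$. Then the set $C_0$ lying between the graphs of $\tilde f$ and $G$ on $\{G\ge\tilde f\}$ is a bounded convex body whose bottom face is exactly the piece of $\partial D$ over $\{G\ge\tilde f\}$, which contains a two-dimensional open neighborhood $U$ of $p$. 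The body $C_0$ has a single corner along the curve $\{\tilde f=G\}$; a $C^\infty$-smoothing of $\partial C_0$ supported in a tubular neighborhood of that corner and disjoint from $U$ yields a strictly convex bounded regular $C\subset\overline{D}$ with $U\subset\partial C\cap\partial D$.

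Next, pick a smooth Jordan curve $\gamma$ in the relative interior of $U$ (as a subset of $\partial C$), and choose $\xi>0$ small enough that the $\xi$-neighborhood of $\gamma$ in $\R^3$ meets $\partial C$ only inside $U$. Applying Corollary \ref{th: MM-2} to the data $(C,\gamma,\xi)$ produces a compact complete proper minimal immersion $\phi:\D\to C$ extending continuously to $\Phi:\overline{\D}\to\overline{C}$ with $\Phi(\partial\D)\subset\partial C$ and $\de^H(\Phi(\partial\D),\gamma)<\xi$. By the choice of $\xi$, $\Phi(\partial\D)\subset U\subset\partial D$. Consequently, for any compact $K\subset D$ one has $\dist(K,\partial D)>0$ and hence $\phi^{-1}(K)$ is bounded away from $\partial\D$ in $\D$, proving that $\phi:\D\to D$ is proper. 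Minimality, completeness, and the continuous extension into $\overline{D}$ are inherited from those of $\phi:\D\to C$.

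The main obstacle is the corner smoothing in the construction of $C$: one has to interpolate along the corner between two smooth strictly convex surfaces (a piece of $\partial D$ and a strictly concave graph) by a smooth surface whose principal curvatures remain strictly positive everywhere, while leaving the distinguished neighborhood $U\subset A$ untouched and keeping the whole body inside $\overline{D}$. This is delicate but achievable by standard localized smoothing techniques in convex surface theory, and is the technical heart of the argument.
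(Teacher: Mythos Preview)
Your proof is correct and follows essentially the same approach as the paper: construct a strictly convex bounded regular domain $C\subset D$ whose boundary shares an open piece $U$ with $A\subset\partial D$, then apply Corollary~\ref{th: MM-2} to a Jordan curve $\gamma\subset U$ with $\epsilon$ small enough to force $\Phi(\partial\D)\subset U\subset\partial D$. The paper's argument is terser---it takes the existence of such a $C$ for granted (with a figure) and does not spell out the properness-in-$D$ step---but the route is identical.
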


\begin{proof}
Consider $C$ a strictly convex bounded regular domain $C\subset D$ and such that $(\partial C)\cap (\partial D)$ is a non empty open subset of $A$ (see Figure \ref{fig: MM-4}).
 \begin{figure}[ht]
    \begin{center}
    \scalebox{0.4}{\includegraphics{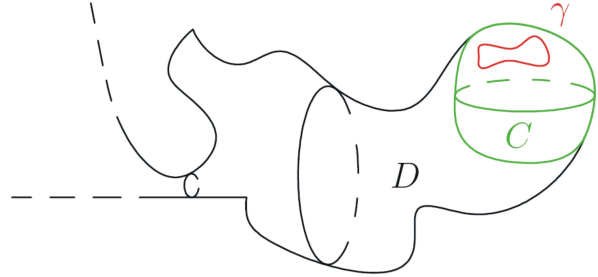}}
        \end{center}
\caption{The domains $D$ and $C$ and the curve $\g$.}\label{fig: MM-4}
\end{figure}
Then, apply Theorem \ref{th: MM-2} to a Jordan curve $\g\subset (\partial C)\cap (\partial D)$ and an $\ep>0$ small enough. Hence, we obtain a compact complete proper minimal immersion $\phi_{(\g,\ep)}:\D\to C\subset D$ whose boundary lies in $A\subset \partial D$. This is the immersion we are looking for.
\end{proof}

An interesting consequence follows from Corollary \ref{th: MM-2}
and the following well-known result in convex geometry \cite[Lemma
4]{MM2}.

\begin{lemma}\label{lem: MM-4}
Let $K$ be a connected compact set in $\partial C$, then for every $\nu>0$ there exists an smooth Jordan curve $\g\subset\partial C$ such that $\de^H(K,\g)<\nu$.
\end{lemma}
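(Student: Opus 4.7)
The plan is to approximate $K$ by the smooth boundary of a thin tubular neighborhood of an embedded tree threading through a fine net of $K$.

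Fix $\eta:=\nu/5$ and choose a finite $\eta$-net $\{p_1,\dots,p_n\}\subset K$, so every point of $K$ is within distance $\eta$ of some $p_i$. Since $K$ is compact and connected, it is $\eta$-chain connected in itself (the set of points in $K$ joinable to $p_1$ by an $\eta$-chain inside $K$ is both open and closed in $K$, and non-empty). Consequently, the graph on $\{p_1,\dots,p_n\}$ with an edge between $p_i$ and $p_j$ whenever $\|p_i-p_j\|<3\eta$ is connected. Fix any spanning tree $T$ of this graph.

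Next, realize $T$ as an embedded $1$-complex $\widetilde{T}\subset\partial C$ by sending each tree edge to the short geodesic arc on $\partial C$ between its endpoints, and then removing any transversal crossings between distinct arcs via small local perturbations; this is possible because finite trees are planar and $\partial C\cong\esf^2$, and alternatively one can start directly from the minimum-length spanning tree, whose short edges cannot cross in the locally Euclidean geometry of $\partial C$ without contradicting minimality. Each arc then has length of order $\eta$, so $\widetilde{T}$ is contained in a $C\eta$-neighborhood of $K$, for some constant $C$ independent of $\nu$. Pick $\tau>0$ with $\tau\ll\eta$ and let $N_\tau\subset\partial C$ be the closed $\tau$-tubular neighborhood of $\widetilde{T}$. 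Since $\widetilde{T}$ is a contractible $1$-complex embedded in the $2$-manifold $\partial C$, $N_\tau$ is a topological closed disk, and a standard local smoothing of its piecewise smooth boundary near the finitely many branch points of $\widetilde{T}$ yields a $C^\infty$ Jordan curve $\gamma\subset\partial C$ contained in the $2\tau$-neighborhood of $\widetilde{T}$.

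The Hausdorff estimate is immediate: every point of $\gamma$ lies within $2\tau$ of $\widetilde{T}$ and hence within $C\eta+2\tau$ of $K$, while every $p\in K$ lies within $\eta$ of some $p_i\in\widetilde{T}$ and hence within $\eta+2\tau$ of $\gamma$. Choosing first $\eta$ and then $\tau$ sufficiently small gives $\de^H(\gamma,K)<\nu$, as required.

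The main obstacle is the second step: realizing the abstract spanning tree $T$ as an embedded tree in $\partial C$ with short edges. Two short geodesic arcs between pairs of net points can a priori cross transversally, and fixing this requires either the MST argument (minimality forbids crossings among sufficiently short edges) or an inductive local rerouting of crossings that keeps edge lengths of order $\eta$. Once embeddedness is secured, the remaining ingredients—tubular neighborhoods of contractible $1$-complexes in surfaces, and local smoothing of corners—are standard surface topology.
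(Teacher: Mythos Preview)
The paper does not prove this lemma; it simply quotes it as the ``well-known result in convex geometry'' \cite[Lemma~4]{MM2} and uses it as a black box. There is therefore no argument in the present paper to compare your attempt against.

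On its own merits your proposal is essentially correct, with one bookkeeping slip and one genuine (but acknowledged) gap. The slip: you fix $\eta=\nu/5$ at the outset but then conclude by ``choosing first $\eta$ and then $\tau$ sufficiently small''; since the constant $C$ in your bound ``$\widetilde T$ is contained in a $C\eta$-neighbourhood of $K$'' depends on the geometry of $\partial C$ (it comes from comparing extrinsic and geodesic distance on the smooth compact surface), you must choose $\eta$ in terms of $\nu$ and that constant rather than hard-wire $\nu/5$. This is cosmetic.

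The substantive point is exactly the one you flag: realizing the abstract spanning tree as an \emph{embedded} $1$-complex in $\partial C$ with edges of length $O(\eta)$. Your MST argument is the cleaner of the two options and does work once $\eta$ is small relative to the injectivity radius and curvature scale of $\partial C$: in geodesic normal coordinates about a crossing point the metric is $C^2$-close to Euclidean, so two crossing minimizing segments among the net points would allow the usual exchange that shortens the tree, contradicting minimality. The ``local rerouting'' alternative is viable but needs care to ensure the perturbations do not cascade. Once $\widetilde T$ is embedded, the remaining steps (regular neighbourhood of a contractible $1$-complex in a surface is a disk; smoothing a piecewise-smooth Jordan curve at finitely many corners) are standard, and your Hausdorff estimate is correct.
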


Next result essentially asserts that the Jordan curve $\g$ can be
substituted by an arbitrary compact set in the statement of
Corollary \ref{th: MM-2}.

\begin{corollary}\label{cor: MM-4}
Let $C$ be a strictly convex bounded regular domain, and consider a connected compact set $K\subset \partial C$. Then, for any $\ep>0$ there exists a compact complete proper minimal immersion $\phi_{(K,\ep)}:\D\to C$ satisfying that the Hausdorff distance $\de^H(\phi_{(K,\ep)}(\partial \D),K)<\ep$.
\end{corollary}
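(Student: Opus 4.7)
The plan is to deduce this corollary directly from Corollary~\ref{th: MM-2} and Lemma~\ref{lem: MM-4}, using the triangle inequality for the Hausdorff distance. Since Corollary~\ref{th: MM-2} already produces compact complete proper minimal disks whose boundaries Hausdorff-approximate a prescribed smooth Jordan curve on $\partial C$, and Lemma~\ref{lem: MM-4} asserts that any connected compact subset of $\partial C$ can itself be Hausdorff-approximated by such a Jordan curve, it suffices to chain the two approximations together.

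More precisely, given $\ep>0$ and the connected compact set $K\subset\partial C$, I would first apply Lemma~\ref{lem: MM-4} with $\nu=\ep/2$ to produce a smooth Jordan curve $\g\subset\partial C$ with $\de^H(K,\g)<\ep/2$. Then I would apply Corollary~\ref{th: MM-2} to the pair $(\g,\ep/2)$ to obtain a compact complete proper minimal immersion $\phi_{(\g,\ep/2)}:\D\to C$ satisfying $\de^H(\phi_{(\g,\ep/2)}(\partial\D),\g)<\ep/2$. Defining $\phi_{(K,\ep)}:=\phi_{(\g,\ep/2)}$ then yields an immersion with all the required structural properties (compact, complete, proper, minimal, defined on $\D$).

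The Hausdorff bound follows from the triangle inequality for $\de^H$ on compact subsets of $\R^3$:
\[
\de^H(\phi_{(K,\ep)}(\partial\D),K)\leq \de^H(\phi_{(K,\ep)}(\partial\D),\g)+\de^H(\g,K)<\frac{\ep}{2}+\frac{\ep}{2}=\ep.
\]
There is no real obstacle here; the only point to verify is that $\phi_{(K,\ep)}(\partial\D)$ is a compact set, so that the triangle inequality for $\de^H$ applies without subtlety, and this is immediate from the fact that $\phi_{(\g,\ep/2)}$ extends continuously to $\overline{\D}$ by the definition of compact immersion (Definition~\ref{def: compact}).
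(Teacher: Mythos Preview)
Your proof is correct and follows exactly the approach indicated in the paper: the corollary is stated as an immediate consequence of Corollary~\ref{th: MM-2} and Lemma~\ref{lem: MM-4}, and the paper does not even spell out the triangle-inequality argument that you have written out carefully.
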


\begin{remark}
If we do not take care of the compactness of the immersions, then
the results stated in this subsection are those proved by
Mart\'{i}n and Morales in \cite{MM2}.
\end{remark}


\def\refname{References}

\vspace*{1cm}

\noindent
{\bf Antonio Alarc\'{o}n}\\
Departamento de Matem\'{a}tica Aplicada \\
Universidad de Murcia\\
E-30100 Espinardo, Murcia, Spain \\
e-mail: ant.alarcon@um.es



\end{document}